\newtheorem{Teo}{Theorem}[section]
\newtheorem{Prop}[Teo]{Proposition}
\newtheorem{Claim}[Teo]{Claim}
\theoremstyle{definition}
\newtheorem{Def}[Teo]{Definition}
\newtheorem{Obs}[Teo]{Remark}
\newtheorem{Exa}[Teo]{Example}
\newcommand{\Q}{\mathbb{Q}}
\newcommand{\Z}{\mathbb{Z}}
\newcommand{\N}{\mathbb{N}}
\newcommand{\Llr}{\Longleftrightarrow}
\newcommand{\lra}{\longrightarrow}
\newcommand{\Lra}{\Longrightarrow}
\newcommand{\VR}{\mathcal{O}}
\newcommand{\MI}{\mathfrak{m}}
\begin{document}
\title[Structure of graded algebra]{On the structure of the graded algebra associated to a valuation}
\author{M. S. Barnab\'e}
\author{J. Novacoski}
\author{M. Spivakovsky}
\thanks{During part of the realization of this project Novacoski was supported by a research grant from Funda\c c\~ao de Amparo \`a Pesquisa do Estado de S\~ao Paulo (process number 2017/17835-9).}

\begin{abstract}
The main goal of this paper is to study the structure of the graded algebra associated to a valuation. More specifically, we prove that the associated graded algebra ${\rm gr}_v(R)$ of a subring $(R,\MI)$ of a valuation ring $\mathcal{O}_v$, for which $Kv:=\VR_v/\MI_v=R/\MI$, is isomorphic to $Kv\left [t^{v(R)}\right ]$, where the multiplication is given by a \textit{twisting}. We show that this twisted multiplication can be chosen to be the usual one in the cases where the value group is free or the residue field is closed by radicals. We also present an example that shows that the isomorphism (with the trivial twisting) does not have to exist.
\end{abstract}

\keywords{Valuations, associated graded ring, semigroup algebra}
\subjclass[2010]{Primary 13A18}
\maketitle

\section{Introduction}

Let $\mathcal{O}_v$ be the valuation ring of a valuation $v$ on a field $K$. Let $Kv=\mathcal{O}_v/\mathfrak{m}_v$ be the residue field and $\Gamma_v$ the value group of $v$. For a ring $R \subseteq \mathcal{O}_v$ let $v(R)$ denote the image of $R \setminus\{0\}$ in $\Gamma_v$, $\mathfrak{m}:=R\cap \mathfrak{m}_v$ and consider the graded algebra
\begin{equation*}
{\rm gr}_v(R)= \displaystyle  \bigoplus_{\gamma \in v(R)} \mathcal{P}_{\gamma}/\mathcal{P}^{+}_{\gamma}, 
\end{equation*}
where 
\begin{equation*}
\mathcal{P}_{\gamma}=\{x \in R \,|\, v(x) \geq \gamma\} \mbox{ and } \mathcal{P}^{+}_{\gamma}=\{x \in R \,|\, v(x) > \gamma\}.
\end{equation*}

Graded algebras play an important role to understand extensions of valuations (see for instance, \cite{msro} and \cite{Vaq}). Graded algebras are also central objects in the approach of Teissier for local uniformization (see \cite{Tei} and \cite{Tei2}). Key polynomials are defined to be elements whose natural images in the graded algebra are irreducible and have some additional properties.

The semigroup ring $Kv\left [t^{v(R)}\right ]$ is the set of finite formal sums
\[
\sum_{i=1}^na_i\cdot t^{\gamma_i},\mbox{ with }a_i\in Kv\mbox{ and }\gamma_i\in v(R), 1\leq i\leq n,
\]
where
\[
\sum_{i=1}^na_i\cdot t^{\gamma_i}+\sum_{i=1}^nb_i\cdot t^{\gamma_i}=\sum_{i=1}^n(a_i+b_i)\cdot t^{\gamma_i}
\]
and
\[
\left(\sum_{i=1}^na_i\cdot t^{\gamma_i}\right)\times\left(\sum_{j=1}^mb_j\cdot t^{\gamma_j}\right):=\sum_{i,j}a_ib_j\cdot t^{\gamma_i+\gamma_j}.
\]
In \cite{Cut}, it is stated that ${\rm gr}_v(R)$ is isomorphic to $Kv\left [t^{v(R)}\right ]$. This is not in general true, as our Example \ref{Examatheus} shows. However, the main result of this paper is that these objects are isomorphic if we consider a \textit{twisted multiplication} on $Kv\left [t^{v(R)}\right ]$. We will call a \textbf{choice function on $v(R)$} any right inverse of $v$, i.e., a map $\epsilon: v(R)\lra R$ such that $v(\epsilon (\gamma))=\gamma$ for every $\gamma\in v(R)$. We will always consider choice functions for which $\epsilon(0)=1$. For each choice function, we can define a map (which we will call a \textbf{twisting})
\[
\overline\epsilon: v(R)\times v(R)\lra Kv,\mbox{ by }\overline{\epsilon}(\gamma,\gamma'):=\left(\frac{\epsilon(\gamma)\epsilon(\gamma')}{\epsilon(\gamma+\gamma')}\right)v.
\]
Then we can define a map
\[
\times_\epsilon:Kv\left [t^{v(R)}\right ]\times Kv\left [t^{v(R)}\right ]\lra Kv\left [t^{v(R)}\right ], (a,b)\mapsto a\times_\epsilon b,
\]
by setting $t^\gamma\times_\epsilon t^{\gamma'}:=\overline\epsilon(\gamma,\gamma')\cdot t^{\gamma+\gamma'}$ and extending it to $Kv\left [t^{v(R)}\right ]$ in the obvious way. We prove that this map satisfies the multiplication axioms, making $Kv\left [t^{v(R)}\right]$ into a commutative ring. This multiplication will be called the \textbf{twisted multiplication} induced by $\epsilon$ and we will denote this ring by $Kv\left [t^{v(R)}\right ]_{\epsilon}$.

Our first main theorem is the following.
\begin{Teo} \label{main1} 
If $Kv=R/\MI$, then for every choice function $\epsilon$ we have that
\[
{\rm gr}_{v}(R)\simeq Kv\left [t^{v(R)}\right ]_{\epsilon}.
\]
\end{Teo}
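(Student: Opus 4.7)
The plan is to construct an explicit ring isomorphism $\Phi: Kv[t^{v(R)}]_{\epsilon} \to {\rm gr}_v(R)$ using the hypothesis $Kv = R/\MI$ to lift residues, and to let the twisting formula $\overline\epsilon$ fall out naturally from the product $\epsilon(\gamma)\epsilon(\gamma')$ inside $R$. On homogeneous generators I would set
\[
\Phi(a\cdot t^\gamma) := \widetilde a\,\epsilon(\gamma) + \mathcal{P}^+_\gamma \in \mathcal{P}_\gamma/\mathcal{P}^+_\gamma,
\]
where $\widetilde a\in R$ is any lift of $a\in Kv$ under the isomorphism $R/\MI\simeq Kv$, and then extend additively to all of $Kv[t^{v(R)}]_{\epsilon}$.

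The first step is to check that $\Phi$ is well-defined. If $\widetilde a'$ is a second lift, then $\widetilde a-\widetilde a'\in\MI\subseteq\MI_v$, so $v((\widetilde a-\widetilde a')\epsilon(\gamma))>\gamma$ and the two representatives agree in $\mathcal{P}_\gamma/\mathcal{P}^+_\gamma$. Additivity is immediate from the identity $\widetilde{a+b}=\widetilde a+\widetilde b$ (valid modulo $\MI$, which suffices by the same well-definedness argument). The key step is multiplicativity on the generators $t^\gamma$, $t^{\gamma'}$. Using any lift $c\in R$ of $\overline\epsilon(\gamma,\gamma')\in Kv$, one computes
\[
\Phi(t^\gamma\times_\epsilon t^{\gamma'})=c\,\epsilon(\gamma+\gamma')+\mathcal{P}^+_{\gamma+\gamma'},\qquad
\Phi(t^\gamma)\cdot\Phi(t^{\gamma'})=\epsilon(\gamma)\epsilon(\gamma')+\mathcal{P}^+_{\gamma+\gamma'}.
\]
Since by definition $cv=\bigl(\epsilon(\gamma)\epsilon(\gamma')/\epsilon(\gamma+\gamma')\bigr)v$, the element $c-\epsilon(\gamma)\epsilon(\gamma')/\epsilon(\gamma+\gamma')$ lies in $\MI_v$, and multiplying by $\epsilon(\gamma+\gamma')$ gives $v\bigl(c\,\epsilon(\gamma+\gamma')-\epsilon(\gamma)\epsilon(\gamma')\bigr)>\gamma+\gamma'$. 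This is precisely the identity needed to match the two sides in $\mathcal{P}_{\gamma+\gamma'}/\mathcal{P}^+_{\gamma+\gamma'}$, which is exactly how the definition of $\overline\epsilon$ was engineered.

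For bijectivity, I would exhibit an inverse $\Psi:{\rm gr}_v(R)\to Kv[t^{v(R)}]_{\epsilon}$ on homogeneous generators by
\[
\Psi(x+\mathcal{P}^+_\gamma):=\bigl(x/\epsilon(\gamma)\bigr)v\cdot t^\gamma,\qquad x\in\mathcal{P}_\gamma,\; v(x)=\gamma,
\]
which makes sense because $x/\epsilon(\gamma)\in\VR_v$ has a residue in $Kv$. Well-definedness with respect to the coset representative again follows from the fact that $\mathcal{P}^+_\gamma/\epsilon(\gamma)\subseteq\MI_v$. The identities $\Psi\circ\Phi={\rm id}$ and $\Phi\circ\Psi={\rm id}$ are then direct: starting with $a\cdot t^\gamma$, the composite gives $(\widetilde a\,\epsilon(\gamma)/\epsilon(\gamma))v\cdot t^\gamma=\widetilde a v\cdot t^\gamma=a\cdot t^\gamma$; and starting with $x+\mathcal{P}^+_\gamma$, a lift of $(x/\epsilon(\gamma))v$ to $R$ multiplied by $\epsilon(\gamma)$ differs from $x$ by an element of $\MI_v\cdot\epsilon(\gamma)\subseteq\mathcal{P}^+_\gamma$.

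The main obstacle I anticipate is not any single computation but keeping the bookkeeping of lifts clean: the choice function $\epsilon$ lives in $R$, the twisting $\overline\epsilon$ is forced to live in $Kv$ (since the ratio $\epsilon(\gamma)\epsilon(\gamma')/\epsilon(\gamma+\gamma')$ is a unit of $\VR_v$ and generally not in $R$), and one must use the hypothesis $Kv=R/\MI$ to pull $\overline\epsilon(\gamma,\gamma')$ back to $R$ in order to compare it with $\epsilon(\gamma)\epsilon(\gamma')\in R$. Once this is carefully set up, ring-axiom verifications for $\times_\epsilon$ (associativity, commutativity, distributivity) can be read off from the isomorphism with the ring ${\rm gr}_v(R)$, so no separate cocycle calculation is needed.
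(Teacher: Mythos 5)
Your proposal is correct and is essentially the paper's own argument run in the opposite direction: the inverse map $\Psi$ you exhibit is exactly the paper's homomorphism $\psi({\rm in}_v(x))=\left(\frac{x}{\epsilon(v(x))}\right)v\cdot t^{v(x)}$, your lift-based definition of $\Phi$ is precisely the paper's surjectivity argument (which is where the hypothesis $Kv=R/\mathfrak{m}$ enters), and the multiplicativity check rests on the same identity built into the definition of $\overline\epsilon$. The only organizational difference is that you get the ring axioms for $\times_\epsilon$ by transport of structure along the bijection instead of the paper's direct verification (its Proposition \ref{prop1}); this is legitimate under the hypothesis of the theorem, though it proves slightly less than the paper, which establishes that $Kv\left[t^{v(R)}\right]_\epsilon$ is a commutative ring for an arbitrary subring $R\subseteq\mathcal{O}_v$ without assuming $Kv=R/\mathfrak{m}$.
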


Then we study whether there exists a choice function $\epsilon$ for which the twisted multiplication is the usual one, i.e., such that $t^\gamma\times_\epsilon t^{\gamma'}=t^{\gamma+\gamma'}$ for every $\gamma,\gamma'\in v(R)$. Observe that this is the same as saying that $\overline\epsilon\equiv 1v$. We show the following.
\begin{Teo}\label{main2}
If $\Gamma_v$ is a free group or $Kv$ is closed by radicals, then there exists a choice function $\epsilon$ for which $\overline\epsilon\equiv 1v$.
\end{Teo}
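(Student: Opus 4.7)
The plan is to recast the statement as a splitting problem for an abelian group extension, which in each of the two cases reduces to a standard vanishing of $\mathrm{Ext}^1$. Fix an arbitrary choice function $\epsilon_0$ on $v(R)$. Associativity and commutativity of multiplication in $R$ make $\overline{\epsilon_0}$ a symmetric, normalized $2$-cocycle on the commutative monoid $v(R)$ with values in $Kv^*$ (normalization meaning $\overline{\epsilon_0}(0,\gamma) = \overline{\epsilon_0}(\gamma,0) = 1$). Replacing $\epsilon_0$ by $\epsilon(\gamma) := \tilde c(\gamma)\epsilon_0(\gamma)$ for a map $\tilde c : v(R) \to R$ with $\tilde c(0) = 1$ and $v(\tilde c(\gamma)) = 0$ multiplies the twisting by the coboundary of $c := \tilde c\, v : v(R) \to Kv^*$. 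Since $R \to Kv = R/\MI$ is surjective and any nonzero class automatically lifts to an element of value $0$, every $c : v(R) \to Kv^*$ with $c(0) = 1$ arises in this way, so the theorem reduces to showing that $\overline{\epsilon_0}$ is a coboundary.

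To exhibit a coboundary, encode $\overline{\epsilon_0}$ as an extension of commutative monoids: let $E := Kv^* \times v(R)$ with product $(a,\gamma)(b,\gamma') := (ab\,\overline{\epsilon_0}(\gamma,\gamma'),\,\gamma+\gamma')$. The cocycle identity makes the product associative, and the normalization makes $Kv^* \times \{0\}$ a submonoid of $E$ with quotient $v(R)$. Since $Kv^*$ is a group and $v(R)$ is cancellative, $E$ is cancellative and embeds in its group of fractions $E^{\mathrm{gp}}$; a direct computation then yields the short exact sequence of abelian groups
\[
0 \longrightarrow Kv^* \longrightarrow E^{\mathrm{gp}} \longrightarrow G_R \longrightarrow 0,
\]
where $G_R \subseteq \Gamma_v$ is the subgroup generated by $v(R)$ (which coincides with the group completion of $v(R)$, since $v(R)$ is cancellative and already embedded in the abelian group $\Gamma_v$). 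Any group-theoretic splitting $\sigma : G_R \to E^{\mathrm{gp}}$ automatically sends $\gamma \in v(R) \subseteq G_R$ into $E$, because the $G_R$-component of $\sigma(\gamma)$ is $\gamma$; writing $\sigma(\gamma) = (d(\gamma),\gamma)$ on $v(R)$ yields $d : v(R) \to Kv^*$ with $\overline{\epsilon_0}(\gamma,\gamma') = d(\gamma+\gamma')/(d(\gamma)d(\gamma'))$, as needed.

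The remaining task is to produce the splitting, which reduces to $\mathrm{Ext}^1_{\Z}(G_R, Kv^*) = 0$. If $\Gamma_v$ is a free abelian group then so is its subgroup $G_R$ (by the classical theorem that subgroups of free abelian groups are free abelian), hence $G_R$ is projective and $\mathrm{Ext}^1(G_R,-)$ vanishes. If $Kv$ is closed by radicals then $(Kv^*)^n = Kv^*$ for every $n \geq 1$, so $Kv^*$ is a divisible, hence injective, abelian group and $\mathrm{Ext}^1(-, Kv^*)$ vanishes. Either way the extension splits. The step that needs the most care is the identification of $E^{\mathrm{gp}}$ as a genuine extension of $G_R$ by $Kv^*$ with the prescribed inclusion and projection; this is a small but non-automatic computation inside the group of fractions, and relies crucially on the normalization of $\overline{\epsilon_0}$.
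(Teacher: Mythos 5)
Your proposal is correct, but it takes a genuinely different route from the paper. The paper handles the two hypotheses by separate explicit constructions: for $\Gamma_v$ free (Theorem \ref{thmparte2}) it picks elements $z_i\in K$ above a basis and defines $\epsilon$ multiplicatively, a hands-on use of projectivity; for $Kv$ closed by radicals (Theorem \ref{cbr}) it runs a Zorn's lemma argument, extending a trivializing choice function one step at a time from a subgroup $\Phi$ to $\Phi+\langle\gamma\rangle$ and using an $n_0$-th root in $Kv$ to repair the relation $n_0\gamma\in\Phi$ --- in effect a direct, special-case proof that the divisible group $Kv^*$ is injective. You instead note that $\overline{\epsilon_0}$ is a normalized symmetric $2$-cocycle (this is exactly the paper's identity \eqref{multprod} together with Remark 2.1), package it into the abelian extension $0\to Kv^*\to E^{\mathrm{gp}}\to G_R\to 0$, verify --- correctly, and this is indeed the step needing care --- that any splitting lands in $E$ over $v(R)$ and hence exhibits $\overline{\epsilon_0}$ as a coboundary, and then invoke $\mathrm{Ext}^1_{\Z}(G_R,Kv^*)=0$: by freeness of subgroups of free abelian groups in the first case, and by divisibility (hence injectivity) of $Kv^*$ in the second. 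Your approach buys a unified and conceptually shorter proof, identifying the obstruction to trivializing the twisting as an extension class, at the cost of importing group completion and the divisible-implies-injective theorem; the paper's proofs are elementary and self-contained and produce a trivializing choice function on all of $\Gamma_v$, whereas yours lives on $v(R)$, which is what the statement literally requires. One minor point: lifting $d\colon v(R)\to Kv^*$ back to a map into $R$ uses the standing hypothesis $Kv=R/\MI$ (the setting of the paper, also implicit in the remark following Theorem \ref{thmparte2}); without it you should lift into $\mathcal{O}_v$ and accept a choice function valued in $K$, exactly as the paper's own constructions do.
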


The condition of $\Gamma_v$ being a free group is satisfied for a large class of valuations. For instance, it is satisfied when $v$ is Abhyankar, i.e., when the equality holds in the inequality
\begin{equation*}
{\rm rat.rk}(v) + {\rm tr.deg}_{R/\mathfrak{m}} Kv \leq \dim R
\end{equation*}
(see Abhyankar's Inequality \cite[Appendix 5]{Zar} for more details).

We observe that this problem is similar to the one addressed by Kaplansky in \cite{Kap}, where the main goal is to understand maximal fields. There it is studied whether the extension of a valued field to a maximal extension is unique. The concept of \textit{power set} appearing there is very similar to the twistings appearing here.

\textbf{Acknowledgements.} We would like to thank the anonymous referee for a careful reading and for pointing out a few mistakes in an earlier version of this paper.

\section{The isomorphism}\label{CFAI}
We start by presenting some easy remarks about a given choice function $\epsilon$.
\begin{Obs}
\textbf{(i)} The map $\overline{\epsilon}$ is symmetric and $\overline{\epsilon}(0,\alpha)=1v$, for every $\alpha\in v(R)$.\\
\textbf{(ii)} Given three elements $\alpha, \beta, \gamma \in v(R)$, we have
\begin{equation}\label{multprod}
\begin{array}{rl}
\overline{\epsilon}(\alpha,\beta) \cdot \overline{\epsilon}(\alpha+\beta,\gamma) & = \displaystyle \left(\frac{\epsilon(\alpha)\epsilon(\beta)}{\epsilon(\alpha+\beta)}\right)v \cdot \left(\frac{\epsilon(\alpha+\beta)\epsilon(\gamma)}{\epsilon(\alpha+\beta+\gamma)}\right)v \\[15pt]
& = \displaystyle \left(\frac{\epsilon(\alpha)\epsilon(\beta)\epsilon(\gamma)}{\epsilon(\alpha+\beta+\gamma)}\right)v \\[15pt]
& =  \displaystyle \left(\frac{\epsilon(\alpha)\epsilon(\beta + \gamma)}{\epsilon(\alpha+\beta+\gamma)}\right)v \cdot  \left(\frac{\epsilon(\beta)\epsilon(\gamma)}{\epsilon(\beta+\gamma)}\right)v  \\[15pt]
& =\overline{\epsilon}(\alpha,\beta+\gamma) \cdot \overline{\epsilon}(\beta,\gamma).
\end{array}
\end{equation}
\end{Obs}

We now show that the map $\times_\epsilon$ induces a structure of commutative ring on $Kv\left [t^{v(R)}\right ]$.

\begin{Prop} \label{prop1}
For each choice function $\epsilon$, the set $Kv\left [t^{v(R)}\right ]$ with the standard sum and the multiplication given by $\times_{\epsilon}$ is a commutative ring.
\end{Prop}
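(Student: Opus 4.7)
The plan is to verify each ring axiom in turn, reducing everything to bilinear extension from the basis $\{t^\gamma \mid \gamma \in v(R)\}$ so that all identities only need to be checked on monomials. Addition is by construction the standard componentwise sum on a free $Kv$-module, so $(Kv[t^{v(R)}], +)$ is trivially an abelian group and there is nothing to do on the additive side. Since $\times_\epsilon$ is defined on pairs $(t^\gamma, t^{\gamma'})$ and then extended in the obvious $Kv$-bilinear way to finite sums, distributivity with respect to $+$ is automatic from the construction.

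For commutativity, the symmetry of $\overline\epsilon$ recorded in Remark~(i) gives $t^\gamma\times_\epsilon t^{\gamma'}=\overline\epsilon(\gamma,\gamma')t^{\gamma+\gamma'}=\overline\epsilon(\gamma',\gamma)t^{\gamma'+\gamma}=t^{\gamma'}\times_\epsilon t^\gamma$, which, by bilinearity, extends to arbitrary elements. For the identity element, the choice $\epsilon(0)=1$ together with Remark~(i) gives $\overline\epsilon(0,\gamma)=1v$, so $1\cdot t^{0}$ acts as a left (and hence two-sided) identity on every monomial, and therefore on every finite sum.

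The only nontrivial point is associativity, and again it suffices to check it on three basis elements $t^\alpha, t^\beta, t^\gamma$. Unwinding the definition, I would compute
\begin{equation*}
(t^\alpha\times_\epsilon t^\beta)\times_\epsilon t^\gamma = \overline\epsilon(\alpha,\beta)\,\overline\epsilon(\alpha+\beta,\gamma)\, t^{\alpha+\beta+\gamma},
\end{equation*}
\begin{equation*}
t^\alpha\times_\epsilon (t^\beta\times_\epsilon t^\gamma) = \overline\epsilon(\beta,\gamma)\,\overline\epsilon(\alpha,\beta+\gamma)\, t^{\alpha+\beta+\gamma},
\end{equation*}
so the equality of the two is precisely the cocycle identity~(\ref{multprod}) already established in the preceding remark. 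This is the step that carries the analytic content of the proposition; everything else is bookkeeping.

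There is no real obstacle, since the work was front-loaded into Remark~(i) and the identity~(\ref{multprod}): associativity is exactly the cocycle condition, commutativity is exactly the symmetry of $\overline\epsilon$, and the identity is exactly the normalization $\overline\epsilon(0,\alpha)=1v$. I would therefore present the proof as a short verification consisting of the four items above, invoking Remark~(i) and the chain of equalities in~(\ref{multprod}) at the appropriate places, and noting at the start that bilinearity reduces everything to the monomial case.
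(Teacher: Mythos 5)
Your proposal is correct and follows essentially the same route as the paper's proof: reduce all axioms to monomials via the bilinear extension, get commutativity from the symmetry of $\overline\epsilon$, the unit from $\overline\epsilon(0,\alpha)=1v$, distributivity from the construction, and associativity from the cocycle identity \eqref{multprod}. No substantive differences to report.
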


\begin{proof}
Under addition, $Kv\left [t^{v(R)}\right ]$ is an abelian additive group by definition. For the multiplication, from the way that it is defined, it is enough to check the properties only for elements of the form $f=f_{\alpha}t^{\alpha}$. Take $f=f_{\alpha}\cdot t^{\alpha}, g=g_{\beta}\cdot t^{\beta}\in Kv\left [t^{v(R)}\right ]$. 
Then
\[
f \times_{\epsilon}g  = (f_{\alpha}g_{\beta})\cdot t^{\alpha}\times_{\epsilon}t^{\beta} =  (g_{\beta}f_{\alpha})\cdot t^{\beta}\times_{\epsilon}t^{\alpha} = g \times_{\epsilon}f.
\]
Hence $\times_\epsilon$ is commutative. Also, for $1:=(1v)\cdot t^0 \in Kv\left [t^{v(R)}\right ]$ we have
\[
1\times_{\epsilon}f =f_{\alpha}\overline{\epsilon}(0,\alpha)\cdot t^{0+\alpha}=f_\alpha \cdot t^\alpha=f,
\]
so $Kv\left [t^{v(R)}\right ]$ has a unit element.

To prove that $\times_\epsilon$ is distributive, we take $h=h_\beta\cdot t^\beta$ and observe that
\begin{equation*}
\begin{array}{rcl}
f\times_{\epsilon}(g+h)&=& f_{\alpha}\cdot t^{\alpha}\times_{\epsilon}(g_{\beta}+h_{\beta})\cdot t^{\beta} =f_{\alpha}(g_{\beta}+h_{\beta})\cdot t^{\alpha}\times_{\epsilon}t^{\beta}\\[10pt]
&=& (f_{\alpha}g_{\beta})\cdot t^{\alpha}\times_{\epsilon}t^{\beta}+(f_{\alpha}h_{\beta})\cdot t^{\alpha}\times_{\epsilon}t^{\beta} \\[10pt]
&=& f\times_{\epsilon}g + f\times_{\epsilon}h.
\end{array}
\end{equation*}

Lastly, in order to prove that $\times_{\epsilon}$ is associative it is enough to prove it for elements of the form $f=t^\alpha, g=t^\beta$ and $h= t^\gamma$. In this case, we have
\begin{equation*}
\begin{array}{rcl}
(f\times_{\epsilon}g)\times_{\epsilon}h   & = &\left(\overline{\epsilon}(\alpha,\beta)\cdot t^{\alpha + \beta}\right) \times_{\epsilon} t^{\gamma}=\overline{\epsilon}(\alpha,\beta)\overline{\epsilon}(\alpha+\beta,\gamma)\cdot  t^{\alpha + \beta + \gamma}\\[10pt]
			 &\stackrel{(\ref{multprod})}{=} &\overline{\epsilon}(\alpha,\beta+\gamma)\overline{\epsilon}(\beta,\gamma)\cdot t^{\alpha + \beta + \gamma}= \overline{\epsilon}(\beta,\gamma)\cdot t^{\alpha} \times_{\epsilon}  t^{\beta + \gamma}\\[10pt]
			 & = &f\times_{\epsilon}(g\times_{\epsilon}h).
\end{array}
\end{equation*}
This concludes the proof.
\end{proof}

\begin{Def}
We will denote by $Kv\left [t^{v(R)}\right ]_{\epsilon}$ the ring $(Kv\left [t^{v(R)}\right ], +, \times_{\epsilon})$ and by $Kv\left [t^{v(R)}\right ]$ the ring $(Kv\left [t^{v(R)}\right ], +, \times)$, where $\times$ denotes the usual product.
\end{Def}

For any element $x \in R$ with $v(x) = \gamma$, the natural image of $x$ in $\mathcal{P}_{\gamma}/\mathcal{P}^{+}_{\gamma}$ is a homogeneous element of ${\rm gr}_v(R)$ of degree $\gamma$, which we will denote by ${\rm in}_v(x)$.  Note that, for $x, y \in R$, we have ${\rm in}_v(x)={\rm in}_v(y)$ if and only if $v(x - y) > v(x)$.

For $x \in R\setminus\{0\}$ we define
\begin{equation*}
\psi({\rm in}_{v}(x))=\left(\frac{x}{D(x)}\right)v\cdot t^{v(x)},
\end{equation*}
where $D=\epsilon \circ v$. Extend $\psi$ to ${\rm gr}_v(R)$ by setting
\begin{equation*}
\displaystyle \psi\left(\sum_{\alpha \in \Delta}{\rm in}_{v}(x_{\alpha})\right)=\sum_{\alpha \in \Delta}\psi({\rm in}_{v}(x_{\alpha})) \mbox{ and } \psi(0)=0,
\end{equation*}
where $\Delta$ is a finite subset of $v(R)$ and $v(x_{\alpha})=\alpha.$

\begin{Obs} Note that
\[
t^{v(x)}\times_{\epsilon}t^{v(y)} = \left( \frac{D(x)D(y)}{D(xy)} \right)v \cdot t^{v(xy)}
\]
for all $x,y \in K$.
\end{Obs}

Theorem \ref{main1} will follow from the following proposition.
\begin{Prop} \label{prop2} 
The map $\psi$ is an injective ring homomorphism from ${\rm gr}_{v}(R)$ to $Kv\left [t^{v(R)}\right ]_{\epsilon}.$ Moreover, if $(R,\mathfrak{m})$ is a local ring dominated by $\mathcal{O}_v$ such that $R/\mathfrak{m}=Kv,$ then this map is an isomorphism.
\end{Prop}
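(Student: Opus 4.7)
My plan is to verify the four standard properties in sequence: well-definedness, additivity, multiplicativity, and injectivity/surjectivity.

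First I would check that $\psi$ is well-defined on each graded piece. If $x,y\in R$ with $\mathrm{in}_v(x)=\mathrm{in}_v(y)$, i.e.\ $v(x)=v(y)=\gamma$ and $v(x-y)>\gamma$, then $D(x)=D(y)=\epsilon(\gamma)$, so $(x/\epsilon(\gamma))v=(y/\epsilon(\gamma))v$ because their difference has strictly positive valuation. For additivity I would recall that if $v(x)=v(y)=\gamma$ then in $\mathrm{gr}_v(R)$ we have $\mathrm{in}_v(x)+\mathrm{in}_v(y)=\mathrm{in}_v(x+y)$ when $v(x+y)=\gamma$, and equals $0$ when $v(x+y)>\gamma$; both cases match the formula $((x+y)/\epsilon(\gamma))v\cdot t^\gamma$ on the right-hand side, since in the second case this scalar vanishes. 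For elements of different degrees, additivity is built into the extension from homogeneous to arbitrary sums.

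For multiplicativity on homogeneous elements, using $\mathrm{in}_v(x)\cdot\mathrm{in}_v(y)=\mathrm{in}_v(xy)$ and the remark preceding the proposition, I would directly compute
\[
\psi(\mathrm{in}_v(x))\times_\epsilon\psi(\mathrm{in}_v(y))=\left(\frac{x}{D(x)}\right)v\left(\frac{y}{D(y)}\right)v\cdot\overline{\epsilon}(v(x),v(y))\cdot t^{v(x)+v(y)}=\left(\frac{xy}{D(xy)}\right)v\cdot t^{v(xy)},
\]
where the $D(x)D(y)$ factors cancel against $\overline{\epsilon}(v(x),v(y))=(D(x)D(y)/D(xy))v$; this equals $\psi(\mathrm{in}_v(xy))$.

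Next I would establish injectivity. Suppose $\psi(\sum_{\alpha\in\Delta}\mathrm{in}_v(x_\alpha))=0$ with the $x_\alpha$ of distinct valuations $\alpha$. The images $\psi(\mathrm{in}_v(x_\alpha))$ live in distinct homogeneous components $Kv\cdot t^\alpha$ of $Kv[t^{v(R)}]_\epsilon$, hence each must vanish individually; but $\psi(\mathrm{in}_v(x_\alpha))=0$ forces $v(x_\alpha)>\alpha$, i.e.\ $\mathrm{in}_v(x_\alpha)=0$.

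The step I expect to be the main point of the argument is surjectivity under the dominance hypothesis, since this is where the assumption $R/\mathfrak{m}=Kv$ is actually used. Given a generator $a\cdot t^\gamma$ of $Kv[t^{v(R)}]_\epsilon$ with $a\neq 0$ and $\gamma\in v(R)$, I would use $R/\mathfrak{m}=Kv$ together with $\mathfrak{m}=R\cap\mathfrak{m}_v$ (which follows from $\mathfrak{m}\subseteq\mathfrak{m}_v$ and the maximality of $\mathfrak{m}$) to lift $a$ to a unit $r\in R\setminus\mathfrak{m}$, so that $v(r)=0$ and $rv=a$. Setting $x:=r\cdot\epsilon(\gamma)\in R$ gives $v(x)=\gamma$ and
\[
\psi(\mathrm{in}_v(x))=\left(\frac{r\epsilon(\gamma)}{\epsilon(\gamma)}\right)v\cdot t^\gamma=a\cdot t^\gamma,
\]
and extending by additivity yields all of $Kv[t^{v(R)}]_\epsilon$. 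The case $a=0$ is covered by $\psi(0)=0$.
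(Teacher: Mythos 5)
Your proposal is correct and follows essentially the same route as the paper: check well-definedness via $v(x-y)>v(x)=v(y)$ and $D(x)=D(y)$, additivity by the two cases $v(x+y)>v(x)$ and $v(x+y)=v(x)$, multiplicativity by cancelling $\overline{\epsilon}(v(x),v(y))=(D(x)D(y)/D(xy))v$ against the $D$-factors, and surjectivity by lifting $a\in Kv$ to $r\in R$ (using $R/\mathfrak{m}=Kv$) and taking $r\cdot\epsilon(\gamma)$, exactly as the paper takes $z=\epsilon(\alpha)z'$. The only cosmetic difference is that you derive injectivity from degree-preservation plus injectivity on homogeneous pieces, whereas the paper packages well-definedness and injectivity into a single chain of equivalences; the content is the same.
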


\begin{proof}

We will prove first that $\psi$ is injective and well-defined, i.e., that
\begin{equation}\label{welldefined}
{\rm in}_v(x)={\rm in}_v(y) \Leftrightarrow  \psi({\rm in}_v(x))=\psi({\rm in}_v(y)),
\end{equation}
for every $x,y \in R$. For $x,y \in R$ we have ${\rm in}_v(x)={\rm in}_v(y)$ if and only if
\begin{equation*}
v(x)=v(y) \mbox{ and } v(x-y)>v(x).
\end{equation*}
Since $v(x)=v(D(x)),$ this is equivalent to 
\begin{equation*}
v(x)=v(y) \mbox{ and } v\left(\frac{x-y}{D(x)}\right)>0,
\end{equation*}
which is equivalent to
\begin{equation*}
v(x)=v(y) \mbox{ and } \left(\frac{x-y}{D(x)}\right) v=0.
\end{equation*}
This happens if and only if $\psi({\rm in}_v(x))=\psi({\rm in}_v(y))$.

To prove that $\psi$ is an additive group homomorphism it is enough to show that if $v(x)=v(y)$, then
\[
\psi({\rm in}_v(x)+{\rm in}_v(y))=\psi({\rm in}_v(x))+\psi({\rm in}_v(y)).
\]
To verify this property we have to consider two possibilities: $v(x+y)>v(x)$ or $v(x+y)=v(x)=v(y).$ In the first case we have
\begin{equation*}
\psi({\rm in}_v(x)+{\rm in}_v(y))=0=\left(\frac{x+y}{D(x)}\right)v=\psi({\rm in}_v(x))+\psi({\rm in}_v(y)). 
\end{equation*}
In the second case,
\begin{equation*}
\begin{array}{rcl}
\psi({\rm in}_v(x)+{\rm in}_v(y))& =& \displaystyle \psi({\rm in}_v(x+y)) = \left(\frac{x+y}{D(x+y)}\right)v \cdot t^{v(x+y)} \\[15pt]
& =&\displaystyle \left(\frac{x}{D(x)}\right)v \cdot t^{v(x)} + \left(\frac{y}{D(y)}\right)v \cdot t^{v(y)} \\[10pt]
& =&\displaystyle \psi({\rm in}_v(x))+\psi({\rm in}_v(y)).
\end{array}
\end{equation*}

To show that $\psi$ is multiplicative, it is enough to show that for $x,y \in R$ we have
\[
\psi({\rm in}_v(x){\rm in}_v(y))=\psi({\rm in}_v(x)) \times_{\epsilon} \psi({\rm in}_v(y)).
\]
Since 
\[
t^{v(xy)}=t^{v(x)+v(y)}=\left(\frac{D(xy)}{D(x)D(y)}\right)v\cdot t^{v(x)}\times_\epsilon t^{v(y)}
\]
we have
\begin{equation*}
\begin{array}{rcl}
\psi({\rm in}_v(x){\rm in}_v(y)) & = & \displaystyle \psi({\rm in}_v(xy)) = \left(\frac{xy}{D(xy)}\right)v \cdot t^{v(xy)} \\[20pt]
												& = &\displaystyle \left(\frac{xy}{D(xy)} \frac{D(xy)}{D(x)D(y)}\right)v \cdot t^{v(x)} \times_{\epsilon}  t^{v(y)} \\[20pt]
												& = &\displaystyle \left(\frac{x}{D(x)}\right)v \cdot t^{v(x)} \times_{\epsilon}  \left(\frac{y}{D(y)}\right)v \cdot t^{v(y)} \\[20pt]
											& = &\displaystyle \psi({\rm in}_v(x)) \times_{\epsilon} \psi({\rm in}_v(y)).
\end{array}
\end{equation*}
This proves that $\psi$ is an injective ring homomorphism.

Assume now that $R/\MI=Kv$ and take any element $x\cdot t^{\alpha}\in Kv\left [t^{v(R)}\right ]$. Choose any $z' \in R$ such that $z'v=x$. It is easy to verify that for $z=\epsilon(\alpha)z'$ we have
\begin{equation*}
\displaystyle \psi\left({\rm in}_vz\right)=x\cdot t^{\alpha}.
\end{equation*}
Therefore $\psi$ is surjective.
\end{proof}
\begin{Obs}
We observe that the map $\psi$ is \textit{graded} ring homomorphism, i.e., for every $x\in R$ we have $\deg({\rm in}_v(x))=\deg(\psi({\rm in}_v(x)))$. Here, $\deg$ is the map that associates each homogeneous element to the corresponding element in the grading semigroup.
\end{Obs}
We will now study when we can choose $\epsilon$ such that the multiplication $\times_\epsilon$ is the usual one, i.e., when the map $\overline\epsilon$ is constant and equal to $1$. We start with the following easy remark.

\begin{Obs}\label{remabetweegrandon}
The map $\overline\epsilon$ is constant and equal to $1$ if and only if ${\rm in}_v\circ \epsilon$ is a semigroup homomorphism between $v(R)$ and the multiplicative semigroup of ${\rm gr}_v(R)$. Indeed, for $\alpha,\beta\in v(R)$ we have
\begin{displaymath}
\begin{array}{rcl}
\overline\epsilon(\alpha,\beta)=1v &\Llr & \displaystyle\left(\frac{\epsilon(\alpha)\epsilon(\beta)}{\epsilon(\alpha+\beta)}\right)v=1v\Llr v\left(\frac{\epsilon(\alpha)\epsilon(\beta)}{\epsilon(\alpha+\beta)}-1\right)>0\\[15pt]
& \Llr & \displaystyle v\left(\epsilon(\alpha)\epsilon(\beta)-\epsilon(\alpha+\beta)\right)>v(\epsilon(\alpha+\beta))\\[10pt]
&\Llr & {\rm in}_v\circ \epsilon(\alpha+\beta) = \left({\rm in}_v\circ \epsilon(\alpha)\right)\cdot\left({\rm in}_v\circ \epsilon(\beta)\right)
\end{array}
\end{displaymath}
\end{Obs}

We will prove the two parts of Theorem 1.2 --- the case when $\Gamma_v$ is free and the case when $K_v$ is closed by radicals --- as two separate theorems.

\begin{Teo} \label{thmparte2} 
Assume that $\Gamma_v$ is a free group. Then there exists a choice function $\epsilon:\Gamma_v \to K$ such that $\overline{\epsilon}\equiv 1v$.
\end{Teo}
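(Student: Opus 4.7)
The key idea is to exploit the universal property of free abelian groups: one can define a group homomorphism on a free group by prescribing its values on a basis arbitrarily. The plan is to lift a basis of $\Gamma_v$ to $K^*$ via $v$, and then extend multiplicatively.

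More precisely, I would proceed as follows. Fix a $\Z$-basis $\{e_i\}_{i \in I}$ of $\Gamma_v$. Since $v: K^* \to \Gamma_v$ is a surjective group homomorphism, for each $i \in I$ we may choose some $x_i \in K^*$ with $v(x_i) = e_i$. By the universal property of free abelian groups, there exists a unique group homomorphism
\[
\epsilon : \Gamma_v \lra K^*
\]
with $\epsilon(e_i) = x_i$ for each $i \in I$; concretely, for $\gamma = \sum_i n_i e_i$ (a finite sum) one sets $\epsilon(\gamma) = \prod_i x_i^{n_i}$. Note that automatically $\epsilon(0) = 1$.

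Next I would check the two required properties. First, $\epsilon$ is a choice function, i.e. $v \circ \epsilon = \mathrm{id}_{\Gamma_v}$: both $v \circ \epsilon$ and the identity are group homomorphisms $\Gamma_v \to \Gamma_v$ that agree on the basis $\{e_i\}$, so they coincide. Second, since $\epsilon$ is a group homomorphism into $K^*$, we have $\epsilon(\alpha)\epsilon(\beta) = \epsilon(\alpha+\beta)$ for all $\alpha, \beta \in \Gamma_v$, and therefore
\[
\overline{\epsilon}(\alpha,\beta) = \left(\frac{\epsilon(\alpha)\epsilon(\beta)}{\epsilon(\alpha+\beta)}\right)v = (1)v = 1v.
\]
This gives $\overline{\epsilon} \equiv 1v$, as desired.

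There is no real obstacle here: the entire argument is an immediate application of the defining universal property of free abelian groups, together with surjectivity of $v : K^* \to \Gamma_v$. The only mild subtlety is that the theorem constructs $\epsilon$ on all of $\Gamma_v$ rather than on a subsemigroup $v(R)$, but since $v(R) \subseteq \Gamma_v$ this is stronger than (and implies) what is needed to combine with Proposition \ref{prop2} and Remark \ref{remabetweegrandon}.
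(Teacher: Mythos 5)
Your argument is correct and is essentially the paper's own proof: the paper likewise picks elements $z_i\in K$ with $v(z_i)=\gamma_i$ for a $\Z$-basis $\{\gamma_i\}$ of $\Gamma_v$ and defines $\epsilon$ by the explicit monomial formula $\epsilon(\alpha)=z_{i_1}^{n_1}\cdots z_{i_r}^{n_r}$, which is exactly the homomorphism you obtain from the universal property, and then concludes $\overline{\epsilon}\equiv 1v$ from multiplicativity of $\epsilon$. The only difference is presentational (universal property versus direct verification of well-definedness), so nothing further is needed.
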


\begin{proof}
Write  $\Gamma_v= \displaystyle \bigoplus_{i \in I} \gamma_i\mathbb{Z}$ for some $\gamma_i\in \Gamma_v$. For each $\gamma_i$ we choose $z_i \in K$ such that $v(z_i)=\gamma_i.$ For each $\alpha\in \Gamma_v$, we write
\begin{equation}\label{unique}
\alpha=n_1\gamma_{i_1}+ \cdots + n_r\gamma_{i_r} \in \Gamma_v,\mbox{ for some }n_i \in \mathbb{Z}.
\end{equation}
Define
\begin{equation*}
\epsilon(\alpha):=z_{i_1}^{n_1} \cdots z_{i_r}^{n_r}
\end{equation*}
Since the expression in \eqref{unique} is unique, this map is well-defined. Moreover, $\epsilon$ is a choice function because $v(\epsilon(\alpha))=\alpha$, for every $\alpha\in v(R)$.

Take $\alpha,\beta\in\Gamma_v$ and write (adding $n_i$'s or $m_j$'s equal to zero, if necessary)
\[
\alpha=n_1\gamma_{i_1}+ \cdots + n_r\gamma_{i_r} \mbox{ and } \beta=m_1\gamma_{i_1}+ \cdots + m_r\gamma_{i_r}.
\]
Then
\begin{equation*}
\epsilon(\alpha)\epsilon(\beta)=z_{i_1}^{n_1} \cdots z_{i_r}^{n_r} \cdot z_{i_1}^{m_1} \cdots z_{i_r}^{m_r}= z_{i_1}^{n_1+m_1} \cdots z_{i_r}^{n_r+m_r}=\epsilon(\alpha+\beta).
\end{equation*}
Therefore,
\[
\overline \epsilon(\alpha,\beta)=\left(\frac{\epsilon(\alpha)\epsilon(\beta)}{\epsilon(\alpha+\beta)}\right)v=1v,
\]
which is what we wanted to prove.
\end{proof}

\begin{Obs}
Observe that for every $(R,\MI)$ as in Theorem \ref{main2}, the map constructed above would induce a choice function on $v(R)$ that satisfies the required properties.
\end{Obs}

We will now prove the second part of Theorem \ref{main2}.

\begin{Teo}\label{cbr}
If $Kv$ is closed by radicals, then there exists a choice function $\epsilon:\Gamma_v \to K$ such that $\overline{\epsilon}\equiv 1v$.
\end{Teo}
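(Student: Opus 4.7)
The plan is to recast the existence of $\epsilon$ with $\overline{\epsilon} \equiv 1v$ as the splitting of a short exact sequence of abelian groups. Consider the quotient group $G := K^*/(1+\mathfrak{m}_v)$. The valuation $v$ factors through $G$ to give a surjection $\bar v: G \to \Gamma_v$ whose kernel, via the residue map, is canonically isomorphic to $Kv^*$. Hence there is a short exact sequence
$$1 \longrightarrow Kv^* \longrightarrow G \stackrel{\bar v}{\longrightarrow} \Gamma_v \longrightarrow 0.$$
The key observation is that $\overline{\epsilon}(\alpha,\beta) = 1v$ is equivalent to $\epsilon(\alpha)\epsilon(\beta)/\epsilon(\alpha+\beta) \in 1+\mathfrak{m}_v$, i.e., to the assertion that the composition of $\epsilon$ with the projection $K^* \to G$ is a group homomorphism (and hence a section of $\bar v$). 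Conversely, any group-theoretic section $\sigma: \Gamma_v \to G$ lifts termwise to a set map $\epsilon: \Gamma_v \to K^*$ with $v \circ \epsilon = \mathrm{id}$ and $\overline{\epsilon} \equiv 1v$. Thus it suffices to split the sequence above.

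Since $Kv$ is closed by radicals, $Kv^*$ is a divisible abelian group. I would construct the section by Zorn's lemma, in a style parallel to the proof of Theorem \ref{thmparte2}. Order pairs $(H,\sigma_H)$, where $H$ is a subgroup of $\Gamma_v$ and $\sigma_H: H \to G$ is a section of $\bar v|_H$, by extension, and fix a maximal element. Assume for contradiction that $H \subsetneq \Gamma_v$ and pick $\gamma \notin H$. If $n\gamma \notin H$ for every $n \geq 1$, any $w \in G$ with $\bar v(w) = \gamma$ gives an extension to $H \oplus \mathbb{Z}\gamma$ via $h+k\gamma \mapsto \sigma_H(h)w^k$. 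If instead $n \geq 1$ is minimal with $n\gamma \in H$, set $a := \sigma_H(n\gamma)$, pick any $z \in G$ with $\bar v(z)=\gamma$, and use the divisibility of $Kv^*$ to extract an $n$-th root $c \in Kv^*$ of $a/z^n$; then $w := zc$ satisfies $w^n = a$ and $\bar v(w) = \gamma$, so the assignment $h+k\gamma \mapsto \sigma_H(h)w^k$ is a well-defined extension. Either case contradicts maximality, so $H = \Gamma_v$.

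The main (and only nontrivial) use of the hypothesis on $Kv$ is in extracting the $n$-th root $c$ in the torsion-over-$H$ case; the remainder of the argument is purely formal. One could equivalently invoke the general fact $\mathrm{Ext}^1_{\mathbb{Z}}(\Gamma_v, Kv^*) = 0$ whenever $Kv^*$ is divisible, but the direct Zorn construction has the advantage of being self-contained and of closely mirroring the proof of Theorem \ref{thmparte2}.
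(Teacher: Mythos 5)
Your proposal is correct, and it reaches the result by a genuinely different packaging of the same underlying construction. The paper works throughout with maps into $K$ and checks the identity $\overline{\epsilon}\equiv 1v$ by hand: after the Zorn step it must verify the extended choice function in two separate computations ($n_1+n_2<n_0$ and $n_1+n_2\geq n_0$), using the defining property of $a$ from \eqref{va=0}. You instead pass to $G=K^*/(1+\mathfrak{m}_v)$ and observe that $\overline{\epsilon}\equiv 1v$ means exactly that $\epsilon$ descends to a group-theoretic section of $\bar v:G\to\Gamma_v$ in the extension $1\to Kv^*\to G\to\Gamma_v\to 0$; this is the group-level form of Remark \ref{remabetweegrandon}, and it converts the paper's residue computations into the tautology that a homomorphism into an abelian group is multiplicative. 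Your Zorn argument is then isomorphic to the paper's: your first case is the paper's case $\langle\gamma\rangle\cap\Phi=\emptyset$, and your divisibility step ($c^n=a/z^n$ in $Kv^*$, $w=zc$, $w^n=a$) is precisely the paper's use of closure by radicals to produce $a$ with $\left(x_0/x_{\gamma}^{n_0}\right)v=(a^{n_0})v$, but the consistency of the extended section follows from $w^n=\sigma_H(n\gamma)$ in one line rather than by case analysis. The closing remark that one may instead quote $\mathrm{Ext}^1_{\mathbb{Z}}(\Gamma_v,Kv^*)=0$ for divisible $Kv^*$ is also valid (divisible abelian groups are injective $\mathbb{Z}$-modules) and is strictly more general, at the cost of importing a homological fact whose standard proof is itself a Zorn argument. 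What you leave implicit — nonemptiness of the poset, upper bounds for chains, the divisibility check $n\mid k-k'$ in the well-definedness of $h+k\gamma\mapsto\sigma_H(h)w^k$, and the lift of a section of $\bar v$ back to a choice function $\epsilon:\Gamma_v\to K^*$ with $\epsilon(0)=1$ — is all routine and works as you assert.
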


\begin{proof} Consider the set
\[
\mathcal P:= \left\{(\Phi, \epsilon_{\Phi})\mid \Phi \subseteq \Gamma_v\mbox{ is a subgroup and } \epsilon_{\Phi}:\Phi\lra K\mbox{ is a choice function with } \overline{\epsilon}_{\Phi} \equiv 1v\right\}.
\]
We define a partial order on $\mathcal P$ by setting
\begin{equation*}
(\Phi_1,\epsilon_{{\Phi}_1})\prec (\Phi_2,\epsilon_{{\Phi}_2}) \Llr {\Phi}_1 \subseteq {\Phi}_2 \mbox{ and } \epsilon_{{\Phi}_2} \mbox{ extends } \epsilon_{{\Phi}_1}.
\end{equation*}

For any fixed nonzero $\alpha \in \Gamma_v$ let
\[
\langle \alpha\rangle:=\{n\alpha\mid n\in\Z\}
\]
be the subgroup of $\Gamma_v$ generated $\alpha$. We pick $z\in K$ such that $v(z)=\alpha$ and define
\[
\epsilon_{\langle \alpha\rangle}:\langle \alpha\rangle\lra K\mbox{ by }\epsilon_{\langle\alpha\rangle}(n \alpha)=z^n.
\]
It is easy to check that $\left(\langle \alpha\rangle,\epsilon_{\langle \alpha\rangle}\right)\in \mathcal P$, so $\mathcal P$ is not empty.

Let $Q=\{(\Phi_i,\epsilon_{{\Phi}_i})\}_{i \in I}$ be a totally ordered subset of $\mathcal P$. Set $\Phi_Q:= \displaystyle \bigcup_{i \in I}\Phi_i$. For each $\alpha\in \Phi$ we choose $i\in I$ such that $\alpha\in \Phi_i$ and set $\epsilon_{\Phi_Q}(\alpha)=\epsilon_{\Phi_i}(\alpha)$. Since $Q$ is totally ordered, this map is well-defined. It is straightforward to check that $(\Phi_Q,\epsilon_{\Phi_Q})\in \mathcal P$ and that $(\Phi_Q,\epsilon_{\Phi_Q})$ is an upper bound for $Q$. Hence, we can apply Zorn's Lemma to obtain a maximal element $(\Phi,\epsilon_{\Phi})$ of $\mathcal P$.

We will show that $\Phi=\Gamma_v$ and this will conclude our proof. Suppose, aiming for contradiction, that $\Phi \neq \Gamma_v$ and take $\gamma \in \Gamma_v \setminus \Phi.$ We have two possibilities:
\[
\langle\gamma \rangle\cap \Phi=\emptyset\mbox{ or }\langle\gamma \rangle\cap \Phi\neq\emptyset.
\]

In the first case we take $x_{\gamma} \in K$ with $v(x_{\gamma})=\gamma$ and define $\Psi:=\Phi+\langle\gamma\rangle$ and $\epsilon_{\Psi}: \Psi \to K$ by
\begin{equation*}
\epsilon_{\Psi}(\alpha+n\gamma):=\epsilon_\Phi(\alpha)\cdot x_{\gamma}^n,\mbox{ for }\alpha \in \Phi\mbox{  and }n \in \mathbb{Z}.
\end{equation*}
Since $n\gamma \notin \Phi$ for every $n\in\Z$, we have that for $\alpha, \beta \in \Phi$ and $n, m \in \mathbb{Z}$
\begin{equation*}
\alpha + n\gamma = \beta +m\gamma \Lra \alpha=\beta \mbox{ and } n=m.
\end{equation*}
Therefore, $\epsilon_{\Psi}$ is well-defined and one can check that $(\Psi,\epsilon_\Psi)\in\mathcal P$. 

Now, if $\langle\gamma \rangle\cap \Phi\neq\emptyset$, we define
\[
n_0 = \min \{n \geq 2\mid n \gamma \in \Phi\}.
\]
Let $x_0=\epsilon_{\Phi}(n_0 \gamma) \in K$ and choose $x_{\gamma} \in K$ such that $v(x_{\gamma})=\gamma$. Since $Kv$ is closed by radicals, there exists $a \in K$ such that 
\begin{equation}\label{va=0}
\displaystyle \left( \frac{x_0}{x_{\gamma}^{n_0}}\right)v=(a^{n_0})v.
\end{equation}

Every element in $\Psi:=\Phi+\langle\gamma\rangle$ is of the form $\alpha+n\gamma$, for a uniquely determined $\alpha\in\Phi$ and $n$, $0\leq n<n_0$. Indeed, if $\alpha+n\gamma=\alpha'+m\gamma$, $0\leq m\leq n<n_0$, then $(n-m)\gamma=\alpha'-\alpha\in\Phi$. Then, $n=m$ and $\alpha=\alpha'$ and this gives us the uniqueness. For the existence, every element in $\Psi$ is of the form $\alpha'+r\gamma$ for some $r\in\Z$. Using Euclidean division, we write $r=n_0l+n$ with $0\leq n<n_0$ and $l\in\Z$. Then $\alpha:=\alpha'+n_0\gamma\in \Phi$ which is what we needed.

Define $\epsilon_{\Psi}: \Psi \lra K$ by
\begin{equation*}
\epsilon_{\Psi}(\alpha + n \gamma) = \epsilon_{\Phi}(\alpha) \cdot (ax_{\gamma})^n,
\end{equation*}
where $\alpha + n\gamma \in \Psi$, with $\alpha\in\Phi$ and $0\leq n\leq n_0$. Then $\epsilon_{\Psi}$ is well-defined.

We will show that $(\Psi,\epsilon_{\Psi})\in\mathcal P$ and since $(\Phi,\epsilon_\Phi)\prec (\Psi,\epsilon_\Psi)$ we obtain a contradiction with the maximality of $(\Phi,\epsilon_\Phi)$ in $\mathcal P$. Let $\alpha + n_1\gamma$ and $\beta + n_2 \gamma$ be two elements in $\Psi$, where $0 \leq n_1, n_2 <n_0$. We will consider two cases.

\noindent\textbf{Case 1:} If $n_1 + n_2 < n_0$, then	
\begin{equation*}
\begin{array}{rl}
\overline{\epsilon}_{\Psi}(\alpha + n_1\gamma, \beta + n_2 \gamma) & = \displaystyle \left(\frac{\epsilon_{\Psi}(\alpha+ n_1 \gamma) \,\, \epsilon_{\Psi}(\beta+n_2\gamma)}{\epsilon_{\Psi}(\alpha+\beta+(n_1+n_2)\gamma)}\right)v \\[15pt]
& = \displaystyle \left(\frac{\epsilon_{\Phi}(\alpha) \,\, (ax_{\gamma})^{n_1} \,\, \epsilon_{\Phi}(\beta) \,\, (ax_{\gamma})^{n_2}}{\epsilon_{\Phi}(\alpha+\beta) \,\, (ax_{\gamma})^{n_1+n_2}}\right)v \\[15pt]
& = \displaystyle \left(\frac{\epsilon_{\Phi}(\alpha)\,\, \epsilon_{\Phi}(\beta) }{\epsilon_{\Phi}(\alpha+\beta)} \right)v \cdot \left(\frac{(ax_{\gamma})^{n_1} \,\, (ax_{\gamma})^{n_2} }{ (ax_{\gamma})^{n_1+n_2} }\right)v \\[15pt]
& = \overline{\epsilon}_{\Phi}(\alpha, \beta)\cdot 1v  =1v
\end{array}
\end{equation*}

\noindent\textbf{Case 2:} If $n_1 + n_2 \geq n_0$, then $n_0 \leq n_1 + n_2 <2n_0$ and
\[
(\alpha + n_1\gamma)+(\beta + n_2 \gamma)=(\alpha + \beta + n_0 \gamma) + (n_1 + n_2 - n_0)\gamma.
\]
Since $0\leq n_1+n_2-n_0<n_0$, we have
\[
\epsilon_\Psi((\alpha+n_1\gamma)+(\beta+n_2\gamma))=\epsilon_\Phi(\alpha+\beta+n_0\gamma)\cdot (ax_\gamma)^{n_1+n_2-n_0}.
\]
Then
\begin{equation*}
\begin{array}{rcl}
\overline{\epsilon}_{\Psi}(\alpha + n_1\gamma, \beta + n_2 \gamma) & =&\displaystyle \left(\frac{\epsilon_{\Psi}(\alpha+ n_1 \gamma) \,\, \epsilon_{\Psi}(\beta+n_2\gamma)}{\epsilon_{\Psi}((\alpha+n_1\gamma)+(\beta+n_2\gamma))}\right)v \\[15pt]
& = &\displaystyle \left(\frac{\epsilon_{\Phi}(\alpha) \,\, (ax_{\gamma})^{n_1} \,\, \epsilon_{\Phi}(\beta) \,\, (ax_{\gamma})^{n_2}}{\epsilon_{\Phi}(\alpha+\beta+n_0\gamma) \,\, (ax_{\gamma})^{n_1+n_2-n_0}}\right)v \\[15pt]
& = &\displaystyle \left(\frac{\epsilon_{\Phi}(\alpha) \,\, \epsilon_{\Phi}(\beta) \,\, (ax_{\gamma})^{n_0}}{\epsilon_{\Phi}(\alpha+\beta+n_0\gamma) }\right)v\\
\end{array}
\end{equation*}
Since $\displaystyle\left(\frac{\epsilon_{\Phi}(n_0\gamma)}{(ax_{\gamma})^{n_0}}\right)v=1v$, we multiply on the right side of the above equation by it to obtain
\begin{equation*}
\begin{array}{rcl}
\overline{\epsilon}_{\Psi}(\alpha + n_1\gamma, \beta + n_2 \gamma) & = &\displaystyle \left(\frac{\epsilon_{\Phi}(\alpha)\epsilon_{\Phi}(\beta)(ax_{\gamma})^{n_0}}{\epsilon_{\Phi}(\alpha+\beta+n_0\gamma) }\right)v \cdot  \left(\frac{\epsilon_{\Phi}(n_0\gamma)}{(ax_{\gamma})^{n_0}}\right)v \\[15pt]
& =&\displaystyle\left(\frac{\epsilon_{\Phi}(\alpha)\epsilon_{\Phi}(\beta)\epsilon_{\Phi}(n_0\gamma)}{\epsilon_{\Phi}(\alpha+\beta+n_0\gamma) }\right)v=  1v.	
\end{array}
\end{equation*}
This completes the proof.
\end{proof}		

We now present an example where we have to use a non-trivial twisting to obtain the isomorphism in our main theorem.

\begin{Exa}\label{Examatheus}
Consider ${\bf P} \subseteq \N$ the set of all prime numbers and ${\bf Q}=\{x_p\mid p\in\textbf{P}\}$ a set of independent variables. Consider
\[
K:=\Q(x_p\mid x_p \in {\bf Q})\mbox{ and }\Gamma:=\sum_{p \in {\bf P}}\frac{1}{p}\Z \subset \mathbb{Q}.
\]
We will construct a valuation $v:K^\times\lra \Gamma$. Start by defining $v(x_p):=1/p$ for every $p\in\textbf{P}$ and $v(a)=0$ for every $a\in\Q$. This defines $v$ on every monomial $a\textbf{Q}^{\lambda}\in K$, because
\[
v\left(a\textbf{Q}^{\lambda}\right)=v\left(a\prod_{\lambda(p)\neq 0}x_p^{\lambda(p)}\right)=\sum_{\lambda(p)\neq 0}v(a)+\lambda(p)v(x_p)=\sum_{\lambda(p)\neq 0}\frac{\lambda(p)}{p}.
\]
We extend $v$ to $K$ monomialy, i.e., by setting
\begin{equation*}
v\left(\sum_{i=1}^na_i\textbf{Q}^{\lambda_i}\right)=\min_{a_i\neq 0}\left\{v\left(a_i\textbf{Q}^{\lambda_i}\right)\right\},
\end{equation*}	
in $\Q[x_p\,|\, x_p \in {\bf Q}]$ and $v(P/Q)=v(P)-v(Q)$ in $K$.
\begin{Def}
For any polynomial
\[
P=\sum_{i=1}^na_i\textbf{Q}^{\lambda_i}\in\Q[x_p\,|\, x_p \in {\bf Q}]
\]
we define the \textbf{initial part of $P$ with respect to $v$} by
\[
{\rm ip}_v(P):=\sum_{v(a_i\textbf{Q}^{\lambda_i})=v(P)}a_i\textbf{Q}^{\lambda_i}.
\]
A polynomial $P$ is said to be initial if ${\rm ip}_v(P)=P$.
\end{Def}

\begin{Obs}\label{remakinch}
It is easy to verify that for $P,Q\in \Q[x_p\mid x_p \in {\bf Q}]$ we have
\begin{description}
\item[(i)] ${\rm in}_v\left({\rm ip}_v(P)\right)={\rm in}_v\left(P\right)$; and
\item[(ii)] ${\rm in}_v(P)={\rm in}_v(Q)\Llr {\rm ip}_v(P)={\rm ip}_v(Q).$ In particular, if $P$ and $Q$ are initial polynomials, then
\[
{\rm in}_v(P)={\rm in}_v(Q)\Llr P=Q.
\]

\end{description}
\end{Obs}

\begin{Def}
We will say that the choice function $\epsilon:\Gamma\lra K$ is initial, if for every $\alpha\in \Gamma$, $\epsilon(\alpha)=\frac{P}{Q}$ where both $P$ and $Q$ are initial.
\end{Def}
\begin{Claim}\label{claiminchois}
If $\epsilon$ is a choice function such that $\overline\epsilon\equiv 1v$, then there exists an initial choice function $\epsilon'$ such that $\overline{\epsilon'}\equiv 1v$.
\end{Claim}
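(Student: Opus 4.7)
The plan is to define $\epsilon'$ by replacing the numerator and denominator of each $\epsilon(\alpha)$ with their initial parts. Concretely, for each $\alpha\in\Gamma$ fix a representation $\epsilon(\alpha)=P_\alpha/Q_\alpha$ with nonzero $P_\alpha,Q_\alpha\in\Q[x_p\mid x_p\in\textbf{Q}]$ (taking $P_0=Q_0=1$), and put
\[
\epsilon'(\alpha):=\frac{{\rm ip}_v(P_\alpha)}{{\rm ip}_v(Q_\alpha)}.
\]
Since ${\rm ip}_v$ of a nonzero polynomial is nonzero and satisfies $v({\rm ip}_v(P))=v(P)$, we have $v(\epsilon'(\alpha))=v(P_\alpha)-v(Q_\alpha)=\alpha$ and $\epsilon'(0)=1$, so $\epsilon'$ is a choice function. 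Moreover, ${\rm ip}_v(P_\alpha)$ and ${\rm ip}_v(Q_\alpha)$ are initial polynomials by construction (every monomial appearing in them realizes the same $v$-value), so $\epsilon'$ is initial in the sense of the definition above.

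The heart of the argument is to show $\overline{\epsilon'}\equiv 1v$, which I will reduce to the pointwise identity $\bigl(\epsilon'(\alpha)/\epsilon(\alpha)\bigr)v=1v$ for each $\alpha\in\Gamma$. Setting $A_\alpha:=P_\alpha-{\rm ip}_v(P_\alpha)$ and $B_\alpha:=Q_\alpha-{\rm ip}_v(Q_\alpha)$, so that $v(A_\alpha)>v(P_\alpha)$ and $v(B_\alpha)>v(Q_\alpha)$, a direct expansion gives
\[
\epsilon'(\alpha)-\epsilon(\alpha)=\frac{{\rm ip}_v(P_\alpha)\,Q_\alpha-P_\alpha\,{\rm ip}_v(Q_\alpha)}{{\rm ip}_v(Q_\alpha)\,Q_\alpha}=\frac{{\rm ip}_v(P_\alpha)\,B_\alpha-A_\alpha\,{\rm ip}_v(Q_\alpha)}{{\rm ip}_v(Q_\alpha)\,Q_\alpha},
\]
whose numerator has valuation strictly greater than $v(P_\alpha)+v(Q_\alpha)$ while the denominator has valuation $2v(Q_\alpha)$. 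Hence $v(\epsilon'(\alpha)-\epsilon(\alpha))>v(P_\alpha)-v(Q_\alpha)=\alpha=v(\epsilon(\alpha))$, which is exactly the claimed pointwise identity.

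Applying this identity at $\alpha$, $\beta$ and $\alpha+\beta$, and combining with the hypothesis $\overline{\epsilon}(\alpha,\beta)=1v$, I obtain
\[
\overline{\epsilon'}(\alpha,\beta)=\overline{\epsilon}(\alpha,\beta)\cdot\left(\frac{\epsilon'(\alpha)}{\epsilon(\alpha)}\right)v\cdot\left(\frac{\epsilon'(\beta)}{\epsilon(\beta)}\right)v\cdot\left(\frac{\epsilon(\alpha+\beta)}{\epsilon'(\alpha+\beta)}\right)v=1v,
\]
as required. I do not foresee any serious obstacle; the whole argument rests on the routine fact that truncating a polynomial to its initial part perturbs it by an element of strictly higher valuation, a perturbation which is stable under the sums, products and quotients appearing in $\overline{\epsilon'}$.
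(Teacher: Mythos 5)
Your proposal is correct and follows essentially the same route as the paper: the same definition of $\epsilon'(\alpha)={\rm ip}_v(P_\alpha)/{\rm ip}_v(Q_\alpha)$, the same check that it is an initial choice function, and the same underlying point that truncating numerator and denominator to their initial parts perturbs $\epsilon(\alpha)$ only by an element of strictly higher value. The only cosmetic difference is that you verify $\overline{\epsilon'}\equiv 1v$ via the direct estimate $v(\epsilon'(\alpha)-\epsilon(\alpha))>v(\epsilon(\alpha))$ together with multiplicativity of the residue map on units, whereas the paper argues through the characterization in Remark \ref{remabetweegrandon} and the identity ${\rm in}_v({\rm ip}_v(P))={\rm in}_v(P)$ of Remark \ref{remakinch}; both verifications are sound.
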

\begin{proof}
For each $\alpha\in \Gamma$ we write $\epsilon(\alpha)=\frac{P_\alpha}{Q_\alpha}\in K$. Set $\epsilon'(\alpha):=\frac{{\rm ip}_v(P_\alpha)}{{\rm ip}_v(Q_\alpha)}$. Then
\[
\alpha=v(\epsilon(\alpha))=v\left(\frac{P_\alpha}{Q_\alpha}\right)=v(P_\alpha)-v(Q_\alpha)=v({\rm ip}_v(P_\alpha))-v({\rm ip}_v(Q_\alpha))=v(\epsilon'(\alpha)),
\]
hence $\epsilon'$ is a right inverse for $v$. Moreover, by Remark \ref{remabetweegrandon},
\begin{displaymath}
\begin{array}{rcl}
\overline{\epsilon}(\alpha,\beta)=1v&\Llr&{\rm in}_v\left(\epsilon(\alpha)\right)\cdot{\rm in}_v\left(\epsilon(\beta)\right)={\rm in}_v\left(\epsilon(\alpha+\beta)\right)\\[10pt]
&\Llr&\displaystyle{\rm in}_v\left(\frac{P_\alpha}{Q_\alpha}\right)\cdot{\rm in}_v\left(\frac{P_\beta}{Q_\beta}\right)={\rm in}_v\left(\frac{P_{\alpha+\beta}}{Q_{\alpha+\beta}}\right)\\[15pt]
&\Llr&\displaystyle{\rm in}_v\left(\frac{{\rm ip}_v(P_\alpha)}{{\rm ip}_v(Q_\alpha)}\right)\cdot{\rm in}_v\left(\frac{{\rm ip}_v(P_\beta)}{{\rm ip}_v(Q_\beta)}\right)={\rm in}_v\left(\frac{{\rm ip}_v(P_{\alpha+\beta})}{{\rm ip}_v(Q_{\alpha+\beta})}\right)\\[10pt]
&\Llr&\overline{\epsilon'}(\alpha,\beta)=1v.
\end{array}
\end{displaymath}
\end{proof}
In view of Claim \ref{claiminchois}, we can assume that $\epsilon$ is an initial choice function.
\begin{Claim}\label{claimnohf}
For an initial choice function $\epsilon:\Gamma\lra K$, if $\overline\epsilon\equiv 1v$, then $\epsilon(n\alpha)=\epsilon(\alpha)^n$ for every $n\in\N$ and $\alpha\in\Gamma$.
\end{Claim}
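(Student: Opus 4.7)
The plan is induction on $n$, using the hypothesis that $\epsilon$ is initial to upgrade the equality of initial forms provided by Remark \ref{remabetweegrandon} to an actual equality in $K$. The base case $n=0$ is immediate from the convention $\epsilon(0)=1=\epsilon(\alpha)^0$, and the case $n=1$ is tautological.

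For the inductive step, I suppose $\epsilon(n\alpha)=\epsilon(\alpha)^n$. From $\overline{\epsilon}(\alpha,n\alpha)=1v$ together with Remark \ref{remabetweegrandon} and the induction hypothesis I obtain
$${\rm in}_v(\epsilon((n+1)\alpha))={\rm in}_v(\epsilon(\alpha))\cdot{\rm in}_v(\epsilon(n\alpha))={\rm in}_v(\epsilon(\alpha)^{n+1}),$$
the last equality using that ${\rm in}_v$ is multiplicative on $K^\times$. Writing $\epsilon(\alpha)=P/Q$ and $\epsilon((n+1)\alpha)=P'/Q'$ with $P,Q,P',Q'$ initial polynomials in $\Q[x_p\mid x_p\in\textbf{Q}]$, this rearranges, by multiplying through by ${\rm in}_v(Q)^{n+1}\cdot{\rm in}_v(Q')$, to the identity
$${\rm in}_v(P'\cdot Q^{n+1})={\rm in}_v(Q'\cdot P^{n+1}).$$

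The key algebraic observation needed is that the product of two initial polynomials is again initial. Indeed, if every monomial of $A$ has valuation $v(A)$ and every monomial of $B$ has valuation $v(B)$, then every monomial appearing in $AB$ (even after collecting like terms) has valuation $v(A)+v(B)$, and $AB\neq 0$ since the polynomial ring is an integral domain. Applying this, both $P'\cdot Q^{n+1}$ and $Q'\cdot P^{n+1}$ are initial, so Remark \ref{remakinch}(ii) upgrades the previous equation to the honest polynomial identity $P'\cdot Q^{n+1}=Q'\cdot P^{n+1}$ in $\Q[x_p\mid x_p\in\textbf{Q}]$, equivalent to $\epsilon((n+1)\alpha)=\epsilon(\alpha)^{n+1}$, which closes the induction.

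The only delicate point in this plan is the passage from an equality of initial forms (which only records the leading graded piece) to a genuine equality in $K$. The hypothesis that $\epsilon$ is initial, secured by Claim \ref{claiminchois}, is tailored precisely to make this passage possible, via the rigidity statement of Remark \ref{remakinch}(ii); everything else is a routine induction.
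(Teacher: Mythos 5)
Your proof is correct and follows essentially the same route as the paper: both reduce the claim to an equality of initial forms of products of initial polynomials and then invoke Remark \ref{remakinch} (ii) to upgrade that to a genuine polynomial identity. The only difference is organizational: your explicit induction on $n$ (together with the check that a product of initial polynomials is initial) spells out what the paper asserts directly in the form $\left(\epsilon(\alpha)^n/\epsilon(n\alpha)\right)v=1v$.
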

\begin{proof}
Take arbitrary $\alpha \in \Gamma$ and $n\in\N$. Set 
\begin{equation*}
\epsilon(\alpha)=\frac{P}{Q} \quad \mbox{ and } \quad \epsilon(n\alpha)=\frac{P'}{Q'}.
\end{equation*}
Our assumption guarantees that $\displaystyle\left(\frac{\epsilon(\alpha)^n}{\epsilon(n\alpha)}\right)v=1v$. This gives us
\begin{equation*}
0<\displaystyle v \left(\frac{\epsilon(\alpha)^n}{\epsilon(n\alpha)} -1 \right)=v\left(\frac{P^nQ'}{Q^nP'} -1 \right)=v\left(\frac{P^nQ'-Q^nP'}{Q^nP'}\right).
\end{equation*}
Hence
\[
v\left(P^nQ'-Q^nP'\right)>v\left(Q^nP'\right).
\]
By Remark \ref{remakinch} \textbf{(ii)}, we have $P^nQ'=Q^nP'$ and consequently
\[
\epsilon(\alpha)^n=\frac{P^n}{Q^n}=\frac{P'}{Q'}=\epsilon(n\alpha).
\]
\end{proof}

As a consequence of Claim \ref{claimnohf}, we obtain
\begin{equation}\label{eqroot}
\epsilon(1) = \epsilon \left( p \, \frac{1}{p} \right) = \epsilon \left(\frac{1}{p}\right)^p \mbox{ for every } p \in {\bf P}.
\end{equation}
For an element $R=P/Q\in K$ we set $\deg(R):=\max\{\deg_{\textbf{Q}}(P),\deg_{\textbf{Q}}(Q)\}$. Then $\deg(R^n)=n\deg(R)$ for every $R\in K$ and $n\in\N$. From \eqref{eqroot}, we have
\[
p\mid\deg(\epsilon(1))\mbox{ for every }p\in \textbf{P}.
\]
This implies that $\deg(\epsilon(1))=0$ and consequently $\epsilon(1)\in \Q$. Hence $v(\epsilon(1))=0$, which is a contradiction to our assumption that $\epsilon$ is a right inverse for $v$.
\end{Exa}

\noindent{\footnotesize MATHEUS DOS SANTOS BARNAB\'E\\
Departamento de Matem\'atica--UFSCar\\
Rodovia Washington Lu\'is, 235\\
13565-905 - S\~ao Carlos - SP\\
Email: {\tt matheusbarnabe@dm.ufscar.br} \\\\

\noindent{\footnotesize JOSNEI NOVACOSKI\\
Departamento de Matem\'atica--UFSCar\\
Rodovia Washington Lu\'is, 235\\
13565-905 - S\~ao Carlos - SP\\
Email: {\tt josnei@dm.ufscar.br} \\\\

\noindent{\footnotesize MARK SPIVAKOVSKY\\
CNRS UMR 5219 and Institut de Math\'ematiques de Toulouse\\
118, rte de Narbonne, 31062 Toulouse cedex 9, France\\
and\\ 
Instituto de Matem\'aticas (Unidad Cuernavaca) LaSol, UMI CNRS 2001\\
Universidad Nacional Aut\'onoma de M\'exico\\
Av. Universidad s/n. Col. Lomas de Chamilpa\\
C\'odigo Postal 62210, Cuernavaca, Morelos, M\'exico.\\
Email: {\tt spivakovsky@math.univ-toulouse.fr}
\end{document}